\newtheorem{theorem}{Theorem}[section]
\newtheorem{corollary}[theorem]{Corollary}
\newtheorem{proposition}[theorem]{Proposition}
\newtheorem{lemma}[theorem]{Lemma}
\newtheorem{conj}{Conjecture}
\theoremstyle{definition}
\newtheorem{example}[theorem]{Example}
\newtheorem{remark}[theorem]{Remark}
\begin{document}

\title{The rank of the walk matrix of the extended Dynkin graph $\tilde{D}_n$}
\author{Sunyo Moon\footnotemark[1]\,  
and Seungkook Park\footnotemark[2]} 

\date{}
\renewcommand{\thefootnote}{\fnsymbol{footnote}}
\footnotetext[1]{Research Institute of Natural Sciences, Hanyang University,Seoul, Republic of Korea(symoon89@hanyang.ac.kr)}
\footnotetext[2]{Department of Mathematics and Research Institute of Natural Sciences, Sookmyung Women's University, Seoul, Republic of Korea(skpark@sookmyung.ac.kr)}
\renewcommand{\thefootnote}{\arabic{footnote}}

\maketitle

\begin{abstract}
In this paper, we provide an explicit formula for the rank of the walk matrix of the extended Dynkin graph $\tilde{D}_n$.
\end{abstract}

\section{Introduction} \label{sec:intro}
Let $G$ be a simple graph with $n$ vertices and let $A$ be an adjacency matrix of $G$. The {\it walk matrix} $W(G)$ of $G$ is defined by
\begin{equation*}
  W(G)=\begin{bmatrix}
         e_n & Ae_n & \cdots & A^{n-1}e_n
       \end{bmatrix},
\end{equation*}
where $e_n$ is the all-one vector of length $n$. The $(i,j)$-entry of the walk matrix $W(G)$ counts the number of walks in $G$ of length $j-1$ from the vertex $i$.
A tree obtained from the path of order $n-1$ by adding a pendant edge at the second vertex is called a {\it Dynkin graph} $D_n$ (See Figure \ref{fig:Dynkin}) and by adding a pendant edge to the second to last vertex of $D_n$, we get the {\it extended Dynkin graph} $\tilde{D}_n$ (See Figure \ref{fig:ExtDynkin}), where $n \geq 4$.

\begin{figure}[h!b!t!]
  \centering
  \includegraphics[width=5cm]{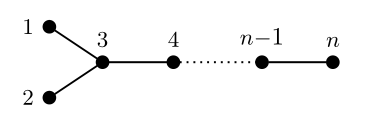}
  \caption{Dynkin graph $D_n$}\label{fig:Dynkin}
\end{figure}

\begin{figure}[h!b!t!]
  \centering
  \includegraphics[width=7cm]{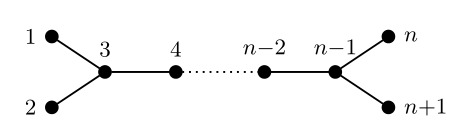}
  \caption{Extended Dynkin graph $\tilde{D}_n$}\label{fig:ExtDynkin}
\end{figure}

Dynkin graphs and extended Dynkin graphs are widely used in the study of simple Lie algebras \cite{Draper, Dynkin, Frappat}, representation theory \cite{Albeverio, Donovan, Gabriel, Kruglyak, Nazarova, Redchuk, Yusenko}, and spectral theory \cite{Cvetkovic, Dokuchaev, Ostrovskii}.
In \cite{Wang22}, Wang, Wang and Guo gave a formula for the rank of the walk matrix of the Dynkin graph $D_n$ ($n \geq 4$). In this paper, we find a formula for the rank of the walk matrix of the extended Dynkin graph $\tilde{D}_n$ ($n \geq 4$) which is $\operatorname{rank}W(\tilde{D}_n)=\lfloor\frac{n}{2}\rfloor$.




For a graph $G$ with $n$ vertices, an eigenvalue of the adjacency matrix of $G$ is said to be {\it main} if its corresponding eigenvector is not orthogonal to $e_n$. In \cite{Hagos}, Hagos showed that the rank of the walk matrix of a graph $G$ is equal to the number of its main eigenvalues.  Thus our formula also provides the number of the main eigenvalues of the walk matrix of the extended Dynkin graph.


\section{Rank of the extended Dynkin graph} \label{sec:result}
Let $\tilde{D}_n$ $(n \geq 4)$ be the extended Dynkin graph with $n+1$ vertices.
We label the vertices of $\tilde{D}_n$ as shown in Figure \ref{fig:ExtDynkin}.
Then the partition $$\Pi=\{\{1,2\},\{3\},\ldots,\{n-1\},\{n,n+1\}\}$$ of $V(\tilde{D}_n)$ is equitable. The $(n+1)\times(n-1)$ characteristic matrix $P$ of $\Pi$ and
 the $(n-1)\times(n-1)$ divisor matrix $B$ of $\Pi$ are
\begin{equation*}
  P=\begin{bmatrix}
    1 & 0 & 0 &  &   \\
    1 & 0 & 0 &  &  \\
    0  & 1 & 0 &  &  \\
     & \ddots & \ddots & \ddots &  \\
       &  & 0 & 1 & 0 \\
       &  & 0 & 0 & 1 \\
       &  & 0 & 0 & 1
  \end{bmatrix}
  ~~\text{and}~~
  B=\begin{bmatrix}
      0 & 1 &  &  &  &  & \\
      2 & 0 & 1 &  &  &  &\\
       & 1 & 0 & 1 &  &  &\\
       & & \ddots & \ddots & \ddots & &\\
       & &  & 1 & 0 & 1 &  \\
       & &  &  & 1 & 0 & 2 \\
       & &  &  &  & 1 & 0
    \end{bmatrix},
\end{equation*}
respectively.

For an $n \times n$ matrix $M$, the {\it Smith normal form} of $M$ is the diagonal matrix $\operatorname{diag}(d_1,d_2,\ldots,d_r,0,\ldots,0)$, where $r=\operatorname{rank}(M)$ and each $d_i$ divides $d_{i+1}$ for $i=1,\ldots,r-1$. Two integral matrices $M_1$ and $M_2$ are {\it integrally equivalent} if there are unimodular matrices $P$ and $Q$ such that $M_2=PM_1Q$, that is, $M_2$ can be obtained from $M_1$ by a finite sequence of elementary row operations and column operations. Two matrices are integrally equivalent if and only if they have the same Smith normal form (See \cite{Gantmacher} for details).

Let $\hat{W}(\tilde{D}_n)$ be the $(n-1)\times(n-1)$ matrix obtained by removing the first and last rows and the last two columns from $W(\tilde{D}_n)$.
Then we define an $(n+1)\times(n+1)$ matrix $W'(\tilde{D}_n)$ in which $\hat{W}(\tilde{D}_n)$ is a submatrix as follows:
\begin{equation*}
    W'(\tilde{D}_n)=\begin{bmatrix}
             0 & 0 & 0 \\
             \hat{W}(\tilde{D}_n) & 0 & 0 \\
             0 & 0 & 0\\
    \end{bmatrix}.
\end{equation*}
For an $n \times n$ matrix $M$, the {\it walk matrix} $W(M)$ of $M$ is defined by
\begin{equation*}
  W(M)=\begin{bmatrix}
         e_n & Me_n & \cdots & M^{n-1}e_n
       \end{bmatrix}.
\end{equation*}

The following lemma provides a method for computing the rank of a walk matrix.

\begin{lemma}\cite[Lemma 2]{Wang22}\label{lem:rank}
  Let $M$ be a real $n\times n$ matrix which is diagonalizable over the real field $\mathbb{R}$. Let $v_1,v_2,\ldots,v_n$ be $n$ linearly independent eigenvectors of $M^T$ corresponding to eigenvalues $\lambda_1,\lambda_2,\ldots,\lambda_n$, respectively. Then we have
  \begin{equation*}
    \det W(M)=\frac{\prod_{1\leq k < j \leq n} (\lambda_j-\lambda_k) \prod_{j=1}^n e_n^T v_j}{\det[v_1,v_2,\ldots,v_n]}.
  \end{equation*}
  Moreover, if $\lambda_1,\lambda_2,\ldots,\lambda_n$ are pairwise different, then $$\operatorname{rank}W(M)=|\{ j:1\leq j \leq n ~~\text{and}~~e_n^Tv_j \neq 0\}|.$$
\end{lemma}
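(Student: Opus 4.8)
The plan is to diagonalize $M^T$ explicitly and then factor the walk matrix $W(M)$ as a product of three matrices whose determinants and ranks are transparent. First I would set $V=[v_1,v_2,\ldots,v_n]$, which is invertible because the $v_j$ are linearly independent, and record the relation $M^T V = V\Lambda$ with $\Lambda=\operatorname{diag}(\lambda_1,\ldots,\lambda_n)$. Transposing $M^T = V\Lambda V^{-1}$ yields $M=(V^T)^{-1}\Lambda V^T$, and hence $M^k=(V^T)^{-1}\Lambda^k V^T$ for every $k\geq 0$.

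The key step is to feed this into the columns of the walk matrix. Writing $c=V^T e_n$, whose $j$-th entry is exactly $e_n^T v_j$, the $k$-th column of $W(M)$ becomes $M^k e_n=(V^T)^{-1}\Lambda^k c$. Collecting the columns for $k=0,1,\ldots,n-1$, I would obtain the factorization $W(M)=(V^T)^{-1}\,[\,c \mid \Lambda c \mid \cdots \mid \Lambda^{n-1}c\,]$. The bracketed matrix has $(i,k+1)$-entry $\lambda_i^{\,k}\,(e_n^T v_i)$, so it splits once more as $\operatorname{diag}(e_n^T v_1,\ldots,e_n^T v_n)$ times the Vandermonde matrix with $(i,k+1)$-entry $\lambda_i^{\,k}$.

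From here the determinant formula follows by multiplicativity: the factor $(V^T)^{-1}$ contributes $1/\det[v_1,\ldots,v_n]$, the diagonal factor contributes $\prod_{j=1}^{n} e_n^T v_j$, and the Vandermonde determinant contributes $\prod_{1\leq k<j\leq n}(\lambda_j-\lambda_k)$. For the rank statement, I would observe that when the eigenvalues are pairwise distinct the Vandermonde matrix is invertible, and $(V^T)^{-1}$ is invertible in all cases; since rank is preserved under left and right multiplication by invertible matrices, $\operatorname{rank}W(M)=\operatorname{rank}\operatorname{diag}(e_n^T v_1,\ldots,e_n^T v_n)$, which is precisely the number of indices $j$ with $e_n^T v_j\neq 0$.

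Because the whole argument reduces to producing the three-factor decomposition and then reading off determinants and ranks, I do not expect a genuine obstacle here. The only points demanding care are tracking the transposes correctly, since the hypothesis supplies eigenvectors of $M^T$ rather than of $M$, and checking that the Vandermonde evaluation is recorded with the same ordering $\prod_{1\leq k<j\leq n}(\lambda_j-\lambda_k)$ as in the claimed formula.
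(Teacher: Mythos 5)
Your proof is correct and complete: the factorization $W(M)=(V^T)^{-1}\,\operatorname{diag}(e_n^Tv_1,\ldots,e_n^Tv_n)\,\big[\lambda_i^{\,k}\big]_{i,k}$ handles the transposes properly, the Vandermonde determinant comes out with the stated ordering $\prod_{1\leq k<j\leq n}(\lambda_j-\lambda_k)$, and the rank claim follows since the outer factors are invertible when the eigenvalues are pairwise distinct. Note that this paper does not prove the lemma at all --- it is quoted from \cite[Lemma 2]{Wang22} --- so there is no in-paper argument to compare against; your decomposition is the natural self-contained proof of both assertions.
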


We outline the steps for establishing our formula for the rank of the extended Dynkin graph $\tilde{D}_n$.
\begin{itemize}
    \item [(1)] Using Lemma \ref{lem:rank}, we find a formula for the rank of the walk matrix of $B$.
    \item [(2)] We show that $\hat{W}(\tilde{D}_n)=W(B)$, which implies $\operatorname{rank}\hat{W}(\tilde{D}_n)=\operatorname{rank}W(B)$.
    \item [(3)] We prove that the Smith normal form of $W(\tilde{D}_n)$ and $W'(\tilde{D}_n)$ are identical.
    \item [(4)] We conclude that
    \[
    \operatorname{rank}W(\tilde{D}_n)=\operatorname{rank}W'(\tilde{D}_n)=\operatorname{rank}\hat{W}(\tilde{D}_n)=\operatorname{rank}W(B).
    \]
\end{itemize}


We start by finding the eigenvalues of $B^T$ and the corresponding eigenvectors.

\begin{proposition}\label{prop:eigvec}
Let $\lambda_k=2\cos\frac{k\pi}{n-2}$ for $k=0,\ldots, n-3$ and  $\lambda_{n-2}=-2$. Let
$v_k=\begin{bmatrix}
       1 & \cos\frac{k\pi}{n-2} & \cos\frac{2k\pi}{n-2} & \cdots & \cos\frac{(n-3)k\pi}{n-2}  & \cos{k\pi}
     \end{bmatrix}^T$ for $k=0,\ldots,n-3$
and $v_{n-2}=\begin{bmatrix}
                              1 & -1 & 1& -1&\cdots~
                            \end{bmatrix}^T$.
Then $v_k$ is an eigenvector of $B^T$ corresponding to the eigenvalue $\lambda_k$, that is, $B^Tv_k=\lambda_kv_k$ for $k=0,\ldots,n-2$.
\end{proposition}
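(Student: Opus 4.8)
The plan is to verify the eigenvalue equation $B^T v_k=\lambda_k v_k$ directly, component by component, using elementary trigonometric identities. First I would record $B^T$ explicitly: since $B$ is tridiagonal with unit super/sub-diagonals except for the entries $B_{2,1}=2$ and $B_{n-2,n-1}=2$, the matrix $B^T$ is tridiagonal with zero diagonal, $B^T_{1,2}=2$, $B^T_{n-1,n-2}=2$, and all other off-diagonal entries equal to $1$. I would also note that the eigenvectors admit a uniform description: setting $\theta_k=\frac{k\pi}{n-2}$, the $j$-th entry of $v_k$ is $\cos((j-1)\theta_k)$ for $j=1,\ldots,n-1$ and all $k=0,\ldots,n-2$. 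Indeed the listed last entry $\cos k\pi$ is exactly this formula at $j=n-1$, and the alternating vector $v_{n-2}$ is the special case $\theta_{n-2}=\pi$, for which $\lambda_{n-2}=2\cos\pi=-2$. Thus a single computation handles every $k$.

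Next I would split the verification by row. For an interior row $2\le j\le n-2$ the $j$-th component of $B^T v_k$ is $(v_k)_{j-1}+(v_k)_{j+1}$, so the equation reads $(v_k)_{j-1}+(v_k)_{j+1}=\lambda_k (v_k)_j$, which is precisely the identity $\cos((j-2)\theta_k)+\cos(j\theta_k)=2\cos\theta_k\cos((j-1)\theta_k)$. This is immediate and requires nothing special about $\theta_k$.

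The two boundary rows, where the doubled entries sit, are the only places where the specific value $\theta_k=\frac{k\pi}{n-2}$ is used, and I expect row $n-1$ to be the crux. Row $1$ requires $2(v_k)_2=\lambda_k(v_k)_1$, i.e. $2\cos\theta_k=2\cos\theta_k\cdot 1$, which holds trivially (equivalently, the reflection $(v_k)_0=(v_k)_2$ makes the doubled edge consistent with the interior recurrence). Row $n-1$ requires $2(v_k)_{n-2}=\lambda_k(v_k)_{n-1}$, that is $2\cos((n-3)\theta_k)=2\cos\theta_k\cos((n-2)\theta_k)$; here the naive reflection is not automatic and one must invoke the quantization $(n-2)\theta_k=k\pi$. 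Writing $(n-3)\theta_k=k\pi-\theta_k$ and $(n-1)\theta_k=k\pi+\theta_k$ and using $\sin k\pi=0$ gives $\cos((n-3)\theta_k)=\cos((n-1)\theta_k)=(-1)^k\cos\theta_k$; combined with the product-to-sum identity $2\cos\theta_k\cos((n-2)\theta_k)=\cos((n-1)\theta_k)+\cos((n-3)\theta_k)$, this yields the required equality. This single boundary computation is the main obstacle; everything else is routine, and assembling the four row-cases gives $B^T v_k=\lambda_k v_k$ for every $k$.
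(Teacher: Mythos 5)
Your proof is correct and follows essentially the same route as the paper: a row-by-row verification in which the interior rows reduce to the product-to-sum identity $\cos((j-2)\theta_k)+\cos(j\theta_k)=2\cos\theta_k\cos((j-1)\theta_k)$ and the last row is handled by exploiting $\sin((n-2)\theta_k)=\sin k\pi=0$. The only (harmless) difference is that you fold $k=0$ and $k=n-2$ into the general computation by viewing every $v_k$ as $\bigl(\cos((j-1)\theta_k)\bigr)_{j}$ with $\theta_k=\frac{k\pi}{n-2}$, whereas the paper dispatches those two cases separately before running the same argument for $k=1,\ldots,n-3$.
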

\begin{proof}
  Since $v_0=e_{n-1}$ and $\lambda_0=2$, $B^Tv_0=\lambda_0v_0$.
  It is easily check that $B^Tv_{n-2} =\lambda_{n-2}v_{n-2}$.
  Now, we show that $B^Tv_k=\lambda_k v_k$ for $k=1,\ldots, n-3$.
  Let $\theta=k\pi/(n-2)$. Then
  \begin{equation*}
    B^Tv_k=\begin{bmatrix}
      0 & 2 &  &  &  &   \\
      1 & 0 & 1 &  &  &  \\
       & 1 & 0 & 1 &  &  \\
       & & \ddots & \ddots & \ddots & \\
        &  &  & 1 & 0 & 1 \\
        &  &  &  & 2 & 0
    \end{bmatrix}
    \begin{bmatrix}
       1 \\ \cos\theta \\ \cos2\theta \\ \vdots \\ \cos(n-3)\theta  \\ \cos{k\pi}
     \end{bmatrix}
     =
    \begin{bmatrix}
             2\cos \theta \\ 1+\cos2\theta \\ \cos\theta+\cos3\theta \\ \vdots \\ \cos(n-4)\theta+\cos k\pi \\ 2\cos(n-3)\theta
           \end{bmatrix}.
  \end{equation*}
  Since $\lambda_k=2\cos\theta$ and $\cos{m\theta}+\cos{(m+2)\theta}= 2\cos\theta\cos(m+1)\theta$, we obtain $$\cos{m\theta}+\cos{(m+2)\theta}= \lambda_k\cos(m+1)\theta$$ for $m=0,\ldots,n-4$.
  We note that $$2\cos(n-3)\theta =2\big(\cos(n-2)\theta \cos\theta +\sin(n-2)\theta \sin\theta\,\big).$$
  Since $\sin(n-2)\theta=\sin k\pi=0$ for all $k=1,\ldots,n-3$, we have
  \begin{equation*}
    2\cos(n-3)\theta=2\cos\theta\cos(n-2)\theta=\lambda_k \cos k\pi.
  \end{equation*}
  Hence $B^Tv_k=\lambda_kv_k$ for all $k=0,\ldots,n-2$.
\end{proof}

The following lemma will be used in the proof of Lemma \ref{lem:dotprd}.
\begin{lemma}\cite[Lemma 7]{Wang22}\label{lem:sum-cos}
  \begin{equation*}
    \sum_{k=1}^n \cos(ak+b)x=\frac{1}{2\sin{\frac{1}{2}ax}}\bigg( \sin\bigg(\bigg(n+\frac{1}{2}\bigg)a+b\bigg)x-\sin\bigg(\frac{1}{2}a+b\bigg)x \bigg)
  \end{equation*}
  for $\sin{\frac{1}{2}ax}\neq 0$.
\end{lemma}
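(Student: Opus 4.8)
The plan is to prove the identity by converting the sum of cosines into a telescoping sum, using the product-to-sum formula for $\sin$ and $\cos$. The key observation is that multiplying a single cosine term by the factor $2\sin\frac{1}{2}ax$ — the very quantity appearing in the denominator of the claimed formula — produces a difference of two sines whose arguments differ by exactly one step in the summation index $k$.

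Concretely, I would first apply the identity $2\sin A\cos B=\sin(A+B)+\sin(A-B)$ with $A=\frac{1}{2}ax$ and $B=(ak+b)x$. A short computation of the two arguments $A\pm B$ shows that
\begin{equation*}
  2\sin\tfrac{1}{2}ax\,\cos(ak+b)x
  =\sin\!\Big(\big(a(k+\tfrac12)+b\big)x\Big)-\sin\!\Big(\big(a(k-\tfrac12)+b\big)x\Big).
\end{equation*}
Setting $f(k)=\sin\big((a(k-\tfrac12)+b)x\big)$, the right-hand side is exactly $f(k+1)-f(k)$, so each summand has been rewritten as a consecutive difference.

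The second step is to sum this over $k=1,\dots,n$ and let the telescope collapse, leaving only the boundary terms $f(n+1)-f(1)$. Evaluating these gives $\sin\big(((n+\tfrac12)a+b)x\big)$ and $\sin\big((\tfrac12 a+b)x\big)$, which match the two sine terms in the statement. Dividing by $2\sin\frac{1}{2}ax$, which is nonzero by the hypothesis $\sin\frac{1}{2}ax\neq0$, yields the claimed formula.

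I do not expect any substantive obstacle here: the result is an elementary trigonometric identity, and the only real care required is bookkeeping. Specifically, one must verify the sign in the $\sin(A-B)$ term — using that $\sin$ is odd — so that the expression genuinely telescopes, and one must track the half-integer shifts $k\pm\tfrac12$ so that the surviving boundary arguments are precisely $(n+\tfrac12)a+b$ and $\tfrac12 a+b$ rather than off-by-one variants. The nonvanishing hypothesis is invoked only at the final division step.
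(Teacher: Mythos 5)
Your proof is correct: applying $2\sin A\cos B=\sin(A+B)+\sin(A-B)$ with $A=\tfrac{1}{2}ax$ and $B=(ak+b)x$, and using that $\sin$ is odd, indeed gives $2\sin\tfrac{1}{2}ax\,\cos(ak+b)x=\sin\big(((k+\tfrac12)a+b)x\big)-\sin\big(((k-\tfrac12)a+b)x\big)$, the sum telescopes to exactly the two stated boundary terms, and the hypothesis $\sin\tfrac{1}{2}ax\neq 0$ enters only at the final division. One point of comparison worth noting: this paper does not prove the lemma at all---it is imported verbatim from \cite[Lemma 7]{Wang22}---so your telescoping argument supplies a correct, self-contained proof where the present paper has none, and it is the standard derivation for sums of this type, so there is no substantive divergence to flag.
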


\begin{lemma}\label{lem:dotprd}
  Let $v_k=\begin{bmatrix}
       1 & \cos\frac{k\pi}{n-2} & \cos\frac{2k\pi}{n-2} & \cdots & \cos\frac{(n-3)k\pi}{n-2}  & \cos{k\pi}
     \end{bmatrix}^T$ for $k=0,\ldots,n-3$
and $v_{n-2}=\begin{bmatrix}
                              1 & -1 & 1& -1&\cdots~
                            \end{bmatrix}^T$.
  Then
  \begin{equation*}
    e_{n-1}^Tv_0=n-1~~~~\text{and}~~~~ e_{n-1}^Tv_k=\begin{cases}
                   1, & \mbox{if $k$ is even}\\
                   0, & \mbox{if $k$ is odd}
                 \end{cases}
  \end{equation*}
  for $k=1,\ldots, n-2$.
\end{lemma}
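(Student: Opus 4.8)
The plan is to recognize that $e_{n-1}^T v_k$ is nothing but the sum of all entries of the vector $v_k$, and to evaluate this sum in three cases according to the structure of the eigenvectors from Proposition \ref{prop:eigvec}. The case $k=0$ is immediate: since $v_0=e_{n-1}$ has all $n-1$ entries equal to $1$, we get $e_{n-1}^Tv_0=n-1$. For the exceptional eigenvector $v_{n-2}=(1,-1,1,-1,\ldots)^T$ of length $n-1$, I would compute the alternating sum directly; it equals $1$ when $n-1$ is odd and $0$ when $n-1$ is even, and one checks this matches the stated parity condition on $k=n-2$, since $n-2$ is even exactly when $n-1$ is odd.

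The substantive case is $k=1,\ldots,n-3$. Writing $\theta=k\pi/(n-2)$, I would first observe that the final entry satisfies $\cos k\pi=\cos((n-2)\theta)$, so the sum of the entries folds neatly into a single cosine sum:
\begin{equation*}
  e_{n-1}^Tv_k=\sum_{m=0}^{n-3}\cos(m\theta)+\cos k\pi=\sum_{m=0}^{n-2}\cos(m\theta).
\end{equation*}
After reindexing $m\mapsto m+1$ to match the template of Lemma \ref{lem:sum-cos} (with $a=1$, $b=-1$, $x=\theta$, and upper limit $n-1$), the summation formula applies, since $\sin\frac{\theta}{2}=\sin\frac{k\pi}{2(n-2)}\neq 0$ for $1\le k\le n-3$.

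The key simplification is the identity $(n-\tfrac{3}{2})\theta=(n-2)\theta+\tfrac{1}{2}\theta=k\pi+\tfrac{\theta}{2}$, which rewrites the boundary sine term as $\sin(k\pi+\tfrac{\theta}{2})=(-1)^k\sin\tfrac{\theta}{2}$. Substituting into the output of Lemma \ref{lem:sum-cos} makes the $\sin\tfrac{\theta}{2}$ factors cancel, leaving
\begin{equation*}
  e_{n-1}^Tv_k=\frac{(-1)^k+1}{2},
\end{equation*}
which is $1$ for even $k$ and $0$ for odd $k$, as desired. I do not expect a genuine obstacle: the only points demanding care are the index shift when invoking Lemma \ref{lem:sum-cos} and the verification that $\sin\tfrac{\theta}{2}$ does not vanish on the range $1\le k\le n-3$.
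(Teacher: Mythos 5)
Your proposal is correct, and while it relies on the same key tool as the paper (the cosine-sum formula of Lemma \ref{lem:sum-cos}), it organizes the computation differently in a way that genuinely streamlines the argument. The paper splits off the first and last entries, applies Lemma \ref{lem:sum-cos} with $b=0$ only to the middle sum $\sum_{m=1}^{n-3}\cos m\theta$, rewrites the result as $\frac{1}{\sin\frac{\theta}{2}}\cos\frac{k\pi}{2}\sin\frac{(n-3)\theta}{2}$, and then needs a parity case split (for odd $k$ the factor $\cos\frac{k\pi}{2}$ vanishes; for even $k$ an angle-subtraction identity shows the sum equals $-1$). You instead fold both boundary entries into a single sum via $\cos k\pi=\cos\bigl((n-2)\theta\bigr)$, apply the lemma with the shifted parameter $b=-1$ to $\sum_{m=0}^{n-2}\cos(m\theta)$, and use $\sin\bigl(k\pi+\tfrac{\theta}{2}\bigr)=(-1)^k\sin\tfrac{\theta}{2}$ to arrive at the uniform closed form
\begin{equation*}
  e_{n-1}^Tv_k=\frac{(-1)^k+1}{2},
\end{equation*}
which handles both parities at once. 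What each approach buys: yours is shorter, avoids the case analysis and the extra product-to-sum manipulation, and makes the parity dependence transparent in a single formula; the paper's keeps the lemma in its most natural form ($b=0$) at the cost of more trigonometric bookkeeping. Both hinge on the same non-vanishing check $\sin\frac{k\pi}{2(n-2)}\neq 0$ for $1\leq k\leq n-3$, which you verify correctly, and your treatment of the cases $k=0$ and $k=n-2$ matches the paper's.
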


\begin{proof}
  It is straightforward to check that
  \begin{equation*}
    e_{n-1}^Tv_0=n-1\qquad
  \text{and}\qquad
    e_{n-1}^Tv_{n-2}=\begin{cases}
      1, & \mbox{if $n$ is even}  \\
      0, & \mbox{if $n$ is odd}.
    \end{cases}
  \end{equation*}
  Now, we give the proof for $k=1,\ldots,n-3$.
  Let $\theta=k\pi/(n-2)$. Then
  \begin{equation*}
    e_{n-1}^Tv_k= 1+ \cos k\pi +\sum_{m=1}^{n-3} \cos m\theta.
  \end{equation*}
  By Lemma \ref{lem:sum-cos}, we have
  \begin{align*}
    \sum_{m=1}^{n-3} \cos m\theta=&~\frac{1}{2\sin\frac{1}{2}\theta}\bigg(\sin\bigg(n-\frac{5}{2}\bigg)\theta -\sin \frac{1}{2}\theta\bigg)\\
    =&\frac{1}{\sin{\frac{1}{2}\theta}}\cos\frac{k}{2}\pi \sin\frac{n-3}{2}\theta.
  \end{align*}
  If $k$ is odd, then $\cos k\pi=-1$ and $\cos \frac{k}{2}\pi=0$ and hence $e_{n-1}^Tv_k=0$.
  Suppose that $k$ is even.
  Then $\cos k\pi=1$ and $\cos \frac{k}{2}\pi=\pm1$.
  Since
  \begin{align*}
    \sin \frac{n-3}{2}\theta= & \sin\bigg( \frac{n-2}{2}\theta -\frac{1}{2}\theta \bigg) \\
    = & \sin\frac{n-2}{2}\theta\cos\frac{1}{2}\theta -\sin\frac{1}{2}\theta\cos\frac{n-2}{2}\theta \\
    = & \sin\frac{k}{2}\pi\cos\frac{1}{2}\theta -\sin\frac{1}{2}\theta\cos\frac{k}{2}\pi \\
    = & -\sin\frac{1}{2}\theta \cos\frac{k}{2}\pi,
  \end{align*}
  we obtain
  \begin{equation*}
    \sum_{m=1}^{n-3} \cos m\theta=-\cos^2\frac{k}{2}\pi=-1.
  \end{equation*}
  Hence $e_{n-1}^Tv_k=1$.
\end{proof}
We can now formulate the rank of the walk matrix of $B$.
\begin{theorem}\label{thm:rankB}
  The rank of the walk matrix of $B$ is $\lfloor \frac{n}{2} \rfloor$.
\end{theorem}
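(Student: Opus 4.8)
The plan is to apply the second part of Lemma \ref{lem:rank} to the matrix $M=B$, for which the relevant eigendata of $B^T$ has already been assembled in Proposition \ref{prop:eigvec} and Lemma \ref{lem:dotprd}. Since the formula $\operatorname{rank}W(B)=|\{j:e_{n-1}^Tv_j\neq 0\}|$ is valid only when the eigenvalues are pairwise distinct, the first task is to verify this hypothesis. The eigenvalues are $\lambda_k=2\cos\frac{k\pi}{n-2}$ for $k=0,\ldots,n-3$ together with $\lambda_{n-2}=-2$. Because the arguments $\frac{k\pi}{n-2}$ lie in $[0,\pi)$ for $0\le k\le n-3$ and cosine is strictly decreasing on $[0,\pi]$, the values $\lambda_0,\ldots,\lambda_{n-3}$ are pairwise distinct; moreover none of them equals $-2$, since $2\cos\frac{k\pi}{n-2}=-2$ would force $k=n-2$, which is excluded. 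Hence all $n-1$ eigenvalues are distinct, so $B$ is diagonalizable over $\mathbb{R}$ and the vectors $v_0,\ldots,v_{n-2}$ of Proposition \ref{prop:eigvec} are linearly independent.

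With the distinctness hypothesis in hand, Lemma \ref{lem:rank} gives $\operatorname{rank}W(B)=|\{k:0\le k\le n-2,\ e_{n-1}^Tv_k\neq 0\}|$. By Lemma \ref{lem:dotprd}, $e_{n-1}^Tv_0=n-1\neq 0$, and for $1\le k\le n-2$ the inner product $e_{n-1}^Tv_k$ is nonzero precisely when $k$ is even. Therefore the rank equals the number of even integers $k$ in the range $0\le k\le n-2$.

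It then remains only to count these even indices, splitting on the parity of $n$: if $n$ is even the even values $0,2,\ldots,n-2$ number $\frac{n}{2}$, while if $n$ is odd the even values $0,2,\ldots,n-3$ number $\frac{n-1}{2}$; in both cases this equals $\lfloor\frac{n}{2}\rfloor$, yielding $\operatorname{rank}W(B)=\lfloor\frac{n}{2}\rfloor$.

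I expect no serious obstacle, as every ingredient is already in place; the only point demanding genuine care is confirming that the eigenvalues are pairwise distinct, and in particular checking that the exceptional eigenvalue $-2$ does not coincide with any $2\cos\frac{k\pi}{n-2}$, since the counting formula of Lemma \ref{lem:rank} holds only under that hypothesis.
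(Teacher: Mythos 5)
Your proposal is correct and follows essentially the same route as the paper: invoke Lemma \ref{lem:rank} with the eigendata of Proposition \ref{prop:eigvec}, justify pairwise distinctness of the eigenvalues via the strict monotonicity of cosine on $[0,\pi]$, and count the even indices $k$ with $e_{n-1}^Tv_k\neq 0$ using Lemma \ref{lem:dotprd}. Your treatment is in fact slightly more careful than the paper's, which leaves implicit both the check that $\lambda_{n-2}=-2$ differs from the other eigenvalues and the parity-splitting count $1+\lfloor\frac{n-2}{2}\rfloor=\lfloor\frac{n}{2}\rfloor$.
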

\begin{proof}
  Let $v_0,\ldots, v_{n-2}$ be the eigenvectors of $B^T$ in Proposition \ref{prop:eigvec}.
  Then there are $1+\lfloor \frac{n-2}{2} \rfloor=\lfloor\frac{n}{2}\rfloor$ eigenvectors of $B^T$ such that $e_{n-1}v_k \neq 0$ by Lemma \ref{lem:dotprd}.
  We note that all eigenvalues of $B^T$ are pairwise different, since the cosine function is monotonically decreasing on $[0,\pi]$.
  By Lemma \ref{lem:rank}, we have
  \begin{equation*}
      \operatorname{rank}W(B)= \bigg\lfloor\frac{n}{2}\bigg\rfloor.
  \end{equation*}
\end{proof}

\begin{lemma}\label{lem:til=B}
  Let $\tilde{D}_n$ be the extended Dynkin graph with $n+1$ vertices.
  Then $\hat{W}(\tilde{D}_n)=W(B)$.
\end{lemma}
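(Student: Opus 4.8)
The plan is to exploit the defining relation of the equitable partition $\Pi$, namely $AP = PB$ where $A$ denotes the adjacency matrix of $\tilde{D}_n$, in order to transfer powers of $A$ acting on $e_{n+1}$ into powers of $B$ acting on $e_{n-1}$. The first observation I would record is that $Pe_{n-1} = e_{n+1}$: since each vertex lies in exactly one cell of $\Pi$, every row of $P$ is a standard basis vector, so summing the columns of $P$ recovers the all-one vector. Combined with $AP = PB$, a routine induction yields $A^jP = PB^j$ for all $j \ge 0$, and hence
\begin{equation*}
A^j e_{n+1} = A^j P e_{n-1} = P B^j e_{n-1}.
\end{equation*}

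Next I would read this column by column. The $j$-th column of $W(\tilde{D}_n)$ is $A^{j-1}e_{n+1} = PB^{j-1}e_{n-1}$, which is precisely $P$ times the $j$-th column of $W(B)$, for $j = 1, \ldots, n-1$. Since $W(B)$ has exactly $n-1$ columns, deleting the last two columns of $W(\tilde{D}_n)$ leaves the first $n-1$ columns, and these assemble into the $(n+1)\times(n-1)$ matrix $P\,W(B)$.

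It then remains to analyse the effect of deleting the first and last rows. Because row $i$ of $P$ is the indicator of the cell containing vertex $i$, row $i$ of $P\,W(B)$ equals the row of $W(B)$ indexed by that cell. Reading off the structure of $\Pi$, the two-element first cell $\{1,2\}$ makes rows $1$ and $2$ of $P\,W(B)$ both equal to row $1$ of $W(B)$; the singleton cells give rows $3,\ldots,n-1$ equal to rows $2,\ldots,n-2$ of $W(B)$; and the two-element last cell $\{n,n+1\}$ makes rows $n$ and $n+1$ both equal to row $n-1$ of $W(B)$. Discarding the first row (a duplicate of row $2$) and the last row (a duplicate of row $n$) leaves rows $2$ through $n$, which are exactly rows $1$ through $n-1$ of $W(B)$. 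Hence the surviving $(n-1)\times(n-1)$ block equals $W(B)$, giving $\hat{W}(\tilde{D}_n) = W(B)$.

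The individual steps are elementary; the only point requiring care—and the one I would treat as the main obstacle—is the index bookkeeping when deleting the first and last rows. One must verify that the two duplicated rows created by the two-element cells are precisely the rows being removed, so that the remaining rows reproduce $W(B)$ in the correct order with neither gaps nor repetitions.
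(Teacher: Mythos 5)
Your proposal is correct and follows essentially the same route as the paper: both use $AP=PB$ together with $Pe_{n-1}=e_{n+1}$ to get $A^je_{n+1}=PB^je_{n-1}$, and then observe that deleting the first and last rows turns $P$ into the identity matrix (your cell-by-cell row bookkeeping is just this observation spelled out). No gaps; the argument matches the paper's proof.
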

\begin{proof}
  Since $AP=PB$, $A^kP=PB^k$ for all $k \geq 0$.
  Then we have $A^ke_{n+1}=PB^ke_{n-1}$ for all $k \geq 0$, because  $Pe_{n-1}=e_{n+1}$.
   It follows that
  \begin{equation*}
    \begin{bmatrix}
      e_{n+1} & Ae_{n+1} & \cdots & A^{n-2}e_{n+1}
    \end{bmatrix}
    =P
    \begin{bmatrix}
      e_{n-1} & Be_{n-1} & \cdots & B^{n-2}e_{n-1}
    \end{bmatrix}.
  \end{equation*}
  If we delete the first and last rows from $\begin{bmatrix}
      e_{n+1} & Ae_{n+1} & \cdots & A^{n-2}e_{n+1}
    \end{bmatrix}$ and $P$, we get $\hat{W}(\tilde{D}_n)$ and the identity matrix, respectively.
    Hence  $\hat{W}(\tilde{D}_n)=W(B)$.
\end{proof}
\begin{remark}\label{rem:rankhat}
  By Theorem \ref{thm:rankB} and Lemma \ref{lem:til=B}, we have
    \begin{equation*}
      \operatorname{rank}\hat{W}(\tilde{D}_n)=\operatorname{rank}W(B)= \bigg\lfloor\frac{n}{2}\bigg\rfloor.
    \end{equation*}
\end{remark}

%

In the next proposition, we show that $W(\tilde{D}_n)$ and $ W'(\tilde{D}_n)$ are integrally equivalent and hence proving that they have the same Smith normal form.

\begin{proposition}\label{prop:rank}
Let $\tilde{D}_n$ be the extended Dynkin graph with $n+1$ vertices. Then
$W(\tilde{D}_n)$ and $ W'(\tilde{D}_n)$ have the same Smith normal form.
\end{proposition}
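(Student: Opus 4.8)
The plan is to produce explicit unimodular matrices $U$ and $Q$ with $U\,W(\tilde{D}_n)\,Q=W'(\tilde{D}_n)$; since integrally equivalent matrices share a Smith normal form, this suffices. The starting point is the factorization already present in the proof of Lemma~\ref{lem:til=B}: from $A^ke_{n+1}=PB^ke_{n-1}$, valid for all $k\ge 0$, reading off the columns gives
\begin{equation*}
W(\tilde{D}_n)=P\,M,\qquad M=\begin{bmatrix} e_{n-1} & Be_{n-1} & \cdots & B^{n}e_{n-1}\end{bmatrix},
\end{equation*}
where $M$ is the $(n-1)\times(n+1)$ matrix whose first $n-1$ columns are precisely $W(B)=\hat{W}(\tilde{D}_n)$.

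First I would eliminate the last two columns. Since $B$ has integer entries, its characteristic polynomial is monic with integer coefficients, so by the Cayley--Hamilton theorem both $B^{n-1}e_{n-1}$ and $B^{n}e_{n-1}$ are integer linear combinations of $e_{n-1},Be_{n-1},\ldots,B^{n-2}e_{n-1}$. Consequently there is a unimodular $(n+1)\times(n+1)$ matrix $Q$, assembled from integral column operations, with $M\,Q=\begin{bmatrix} W(B) & 0 & 0\end{bmatrix}$. Multiplying on the left by $P$ yields
\begin{equation*}
W(\tilde{D}_n)\,Q=P\,M\,Q=\begin{bmatrix} P\,W(B) & 0 & 0\end{bmatrix}.
\end{equation*}

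Next I would eliminate the first and last rows. The characteristic matrix $P$ copies the first coordinate into rows $1$ and $2$ and the last coordinate into rows $n$ and $n+1$; hence in $P\,W(B)$ row $1$ equals row $2$, row $n$ equals row $n+1$, and the middle rows $2,\ldots,n$ reproduce $W(B)=\hat{W}(\tilde{D}_n)$. Letting $U$ be the unimodular matrix performing $R_1\mapsto R_1-R_2$ and $R_{n+1}\mapsto R_{n+1}-R_n$, the product $U\,W(\tilde{D}_n)\,Q$ then has zero first row, zero last row, zero last two columns, and $\hat{W}(\tilde{D}_n)$ filling the remaining block; that is, it equals $W'(\tilde{D}_n)$, which establishes the integral equivalence.

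The individual steps are short, and the one that actually carries the argument is the column reduction: the point to be careful about is that Cayley--Hamilton supplies \emph{integer}, not merely rational, coefficients, so that $Q$ is genuinely unimodular and the equivalence holds over $\mathbb{Z}$ rather than only over $\mathbb{Q}$. Everything else is bookkeeping about how $P$ distributes coordinates into rows, which is already fixed by the definition of $W'(\tilde{D}_n)$ and by Lemma~\ref{lem:til=B}.
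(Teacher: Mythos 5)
Your proof is correct, and it arrives at the same explicit integral equivalence as the paper --- integer column operations annihilating the last two columns, followed by the row operations $R_1\mapsto R_1-R_2$ and $R_{n+1}\mapsto R_{n+1}-R_n$ --- but the justification you give for the column step is genuinely different from, and tighter than, the paper's. The paper works directly with the Krylov matrix of $A$: it sets $r=\operatorname{rank}W(\tilde{D}_n)$, observes $r\le n-1$ because rows $1,2$ and rows $n,n+1$ coincide, and then asserts that the last two columns are \emph{integer} combinations of $e_{n+1},Ae_{n+1},\ldots,A^{r-1}e_{n+1}$. That assertion is true, but as written it is not self-evident: the integrality rests on the fact that the minimal polynomial of the vector $e_{n+1}$ relative to $A$ is monic with integer coefficients (a Gauss's-lemma argument, since it divides the integer characteristic polynomial of $A$), which the paper leaves implicit --- Cayley--Hamilton applied to $A$ itself does not help, because $A$ is $(n+1)\times(n+1)$ and the offending columns $A^{n-1}e_{n+1}$, $A^{n}e_{n+1}$ lie below that degree. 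You sidestep this entirely by factoring $W(\tilde{D}_n)=PM$ with $M=\begin{bmatrix}e_{n-1}&Be_{n-1}&\cdots&B^{n}e_{n-1}\end{bmatrix}$ and applying Cayley--Hamilton to the divisor matrix $B$: since $B$ is only $(n-1)\times(n-1)$ with monic integer characteristic polynomial, while $M$ has $n+1$ columns, the columns $B^{n-1}e_{n-1}$ and $B^{n}e_{n-1}$ are automatically integer combinations of the first $n-1$, so the unimodularity of $Q$ comes for free, with no appeal to the rank of $W(\tilde{D}_n)$ or to local minimal polynomials. The cost is a little extra setup (the factorization through $P$, which Lemma \ref{lem:til=B} already provides); the payoff is that the one delicate point of the argument --- integrality of the eliminating coefficients --- is airtight rather than asserted.
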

\begin{proof}
  Let $r=\operatorname{rank}W(\tilde{D}_n)$.
  Then the first $r$ columns of $W(\tilde{D}_n)$ are linearly independent.
  Note that the first two rows of $W(\tilde{D}_n)$ are the same and the last two rows of $W(\tilde{D}_n)$ are the same.
  Since $r \leq n-1$, the last two columns of $W(\tilde{D}_n)$ can be written as linear combinations of $e_{n+1}$, $Ae_{n+1}$, $\ldots$, $A^{r-1}e_{n+1}$ with integer coefficients.
  Hence we obtain $W'(\tilde{D}_n)$ by using the elementary row and column operations on $W(\tilde{D}_n)$. Therefore, $W(\tilde{D}_n)$ and $ W'(\tilde{D}_n)$ have the same Smith normal form.
\end{proof}

\begin{corollary}
  Let $\tilde{D}_n$ be the extended Dynkin graph with $n+1$ vertices. Then
  $$\operatorname{rank}W(\tilde{D}_n)=\bigg\lfloor\frac{n}{2}\bigg\rfloor.$$
\end{corollary}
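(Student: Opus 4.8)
The plan is to collect the rank equalities that the preceding results have already established and to chain them together; the corollary is then pure bookkeeping, since every substantive computation has been discharged in Theorem \ref{thm:rankB}, Lemma \ref{lem:til=B}, and Proposition \ref{prop:rank}. Concretely, I would argue
\[
\operatorname{rank}W(\tilde{D}_n)=\operatorname{rank}W'(\tilde{D}_n)=\operatorname{rank}\hat{W}(\tilde{D}_n)=\operatorname{rank}W(B)=\left\lfloor\frac{n}{2}\right\rfloor,
\]
reading off the final value from Theorem \ref{thm:rankB}.

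For the first equality, I would invoke Proposition \ref{prop:rank}, which shows that $W(\tilde{D}_n)$ and $W'(\tilde{D}_n)$ share the same Smith normal form. Since the rank of an integer matrix equals the number of nonzero diagonal entries in its Smith normal form, matrices with a common Smith normal form have equal rank; hence $\operatorname{rank}W(\tilde{D}_n)=\operatorname{rank}W'(\tilde{D}_n)$. For the second equality, I would use the explicit block form
\[
W'(\tilde{D}_n)=\begin{bmatrix} 0 & 0 & 0 \\ \hat{W}(\tilde{D}_n) & 0 & 0 \\ 0 & 0 & 0 \end{bmatrix},
\]
in which $\hat{W}(\tilde{D}_n)$ is bordered only by zero rows and zero columns. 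These zero borders contribute nothing to the row or column space, so deleting them leaves the rank unchanged, giving $\operatorname{rank}W'(\tilde{D}_n)=\operatorname{rank}\hat{W}(\tilde{D}_n)$. The third equality is immediate from Lemma \ref{lem:til=B}, which asserts the matrix identity $\hat{W}(\tilde{D}_n)=W(B)$; this is exactly the value already recorded in Remark \ref{rem:rankhat}.

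No genuine obstacle remains at this stage: the real work---computing $\operatorname{rank}W(B)$ via Lemma \ref{lem:rank} together with the eigenvector and dot-product computations of Proposition \ref{prop:eigvec} and Lemma \ref{lem:dotprd}, and justifying the integral equivalence behind Proposition \ref{prop:rank}---has already been carried out. If I had to flag the single most delicate link, it would be the first one: one must be sure that the row and column operations used to pass from $W(\tilde{D}_n)$ to $W'(\tilde{D}_n)$ are genuinely rank-preserving, which holds because rank is an invariant of integral equivalence, and that the zero-padding in $W'(\tilde{D}_n)$ is positioned so that no nonzero structure of $\hat{W}(\tilde{D}_n)$ is destroyed. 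Both points are routine to verify directly from the block description above, so the corollary follows by simply concatenating the four equalities.
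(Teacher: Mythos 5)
Your proposal is correct and follows exactly the same route as the paper: it chains $\operatorname{rank}W(\tilde{D}_n)=\operatorname{rank}W'(\tilde{D}_n)=\operatorname{rank}\hat{W}(\tilde{D}_n)=\operatorname{rank}W(B)=\lfloor\frac{n}{2}\rfloor$ via Proposition \ref{prop:rank}, the block structure of $W'(\tilde{D}_n)$, Lemma \ref{lem:til=B}, and Theorem \ref{thm:rankB}, which is precisely the paper's argument (the paper simply cites Proposition \ref{prop:rank} and Remark \ref{rem:rankhat} with less spelled-out justification). Your added remarks on rank being a Smith-normal-form invariant and on zero-padding preserving rank are sound and merely make explicit what the paper leaves implicit.
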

\begin{proof}
   By Proposition \ref{prop:rank} and Remark \ref{rem:rankhat},
  \begin{equation*}
    \operatorname{rank}W(\tilde{D}_n) =\operatorname{rank}W'(\tilde{D}_n) =\operatorname{rank}\hat{W}(\tilde{D}_n) =\operatorname{rank}W(B)=\bigg\lfloor\frac{n}{2}\bigg\rfloor.
  \end{equation*}
\end{proof}
We give an example of Proposition \ref{prop:rank}.
\begin{example}\label{ex:snf}
Let $\tilde{D}_8$ be the extended Dynkin graph. Then
\begin{equation*}
    W(\tilde{D}_8)=\left[ \begin {array}{ccccccccc} 1&1&3&4&11&16&43&64&171\\ \noalign{\medskip}1&1&3&4&11&16&43&64&171\\ \noalign{\medskip}1&3&4&11&16&43&64&171&256\\ \noalign{\medskip}1&2&5&8&21&32&85&128&341\\ \noalign{\medskip}1&2&4&10&16&42&64&170&256\\ \noalign{\medskip}1&2&5&8&21&32&85&128&341\\ \noalign{\medskip}1&3&4&11&16&43&64&171&256\\ \noalign{\medskip}1&1&3&4&11&16&43&64&171\\ \noalign{\medskip}1&1&3&4&11&16&43&64&171\end {array} \right]
\end{equation*}
and
\begin{equation*}
    W'(\tilde{D}_8)=\left[ \begin {array}{ccccccccc}
    0&0&0&0&0&0&0&0&0\\
    \noalign{\medskip}1&1&3&4&11&16&43&0&0\\
    \noalign{\medskip}1&3&4&11&16&43&64&0&0\\ \noalign{\medskip}1&2&5&8&21&32&85&0&0\\ \noalign{\medskip}1&2&4&10&16&42&64&0&0\\ \noalign{\medskip}1&2&5&8&21&32&85&0&0\\ \noalign{\medskip}1&3&4&11&16&43&64&0&0\\ \noalign{\medskip}1&1&3&4&11&16&43&0&0\\
    \noalign{\medskip}0&0&0&0&0&0&0&0&0\\\end {array} \right].
\end{equation*}
The Smith normal forms of $W(\tilde{D_8})$ and $W'(\tilde{D_8})$ are $\operatorname{diag}(1,1,1,7,0,0,0,0,0)$ and the rank of $W(\tilde{D}_8)$ is $4$.
\end{example}
Since the number of main eigenvalues of $W(\tilde{D}_n)$ is equal to the rank of $W(\tilde{D}_n)$, we have the following corollary.
\begin{corollary}\label{coro:maineig}
Let $\tilde{D}_n$ be the extended Dynkin graph with $n+1$ vertices.
Then the number of main eigenvalues of $\tilde{D}_n$ is $\lfloor\frac{n}{2}\rfloor$.
\end{corollary}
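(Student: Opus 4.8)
The plan is to obtain this corollary as an immediate consequence of the rank formula already established, using the bridge between walk matrices and main eigenvalues provided by Hagos. The work here is organizational rather than computational: all the difficulty has been front-loaded into the preceding results.

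First I would recall Hagos's theorem, cited in the introduction, which asserts that for any simple graph $G$ the rank of the walk matrix $W(G)$ equals the number of main eigenvalues of $G$, that is, the number of distinct eigenvalues of the adjacency matrix whose eigenspace is not orthogonal to the all-one vector. This identity is what converts the purely linear-algebraic quantity $\operatorname{rank}W(\tilde{D}_n)$ into the spectral count we want. The only hypothesis to check is that $\tilde{D}_n$ is a simple graph, which holds by construction.

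Next I would invoke the preceding corollary, which gives $\operatorname{rank}W(\tilde{D}_n)=\lfloor\frac{n}{2}\rfloor$. Substituting this value into Hagos's identity yields at once that the number of main eigenvalues of $\tilde{D}_n$ equals $\lfloor\frac{n}{2}\rfloor$, which is the assertion of the corollary.

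Since the substantive content has already been carried out, namely the determination of the eigenstructure of $B^{T}$ in Proposition \ref{prop:eigvec}, the evaluation of the inner products $e_{n-1}^{T}v_k$ in Lemma \ref{lem:dotprd}, the identification $\hat{W}(\tilde{D}_n)=W(B)$, and the Smith normal form argument transferring the rank back to $W(\tilde{D}_n)$, there is no genuine obstacle at this final step. The corollary is a one-line deduction, and the proof amounts simply to combining Hagos's characterization with the rank value obtained above.
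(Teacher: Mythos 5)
Your proposal is correct and matches the paper's reasoning exactly: the paper derives this corollary by combining Hagos's theorem (rank of the walk matrix equals the number of main eigenvalues) with the rank formula $\operatorname{rank}W(\tilde{D}_n)=\lfloor\frac{n}{2}\rfloor$ established in the preceding corollary. Nothing is missing; this is precisely the one-line deduction the authors intend.
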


\section{Future works}
In this paper, we show that the rank of the walk matrix of the extended Dynkin graph $\tilde{D}_n$ is $\lfloor\frac{n}{2}\rfloor$.
The next step would be to find the Smith normal form of the walk matrix of $\tilde{D}_n$. We give the following conjecture.
\begin{conj}
Let $\tilde{D}_n$ be the extended Dynkin graph with $n+1$ vertices. Then the Smith normal form of $W(\tilde{D}_n)$ is
\begin{equation*}
    \begin{cases}
        \operatorname{diag}(\underbrace{1,\ldots,1}_{r-1},n-1,0,\ldots,0), & \mbox{if $n$ is even,}\\
        \operatorname{diag}(\underbrace{1,\ldots,1}_{r-1},\frac{n-1}{2},0,\ldots,0), & \mbox{if $n$ is odd,}
    \end{cases}
\end{equation*}
where $r=\lfloor \frac{n}{2}\rfloor$. 
\end{conj}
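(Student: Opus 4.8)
The plan is to reduce the problem to the Smith normal form of the integer matrix $W(B)$. By Proposition~\ref{prop:rank}, $W(\tilde{D}_n)$ and $W'(\tilde{D}_n)$ have the same Smith normal form, and $W'(\tilde{D}_n)$ is obtained from $\hat{W}(\tilde{D}_n)$ by bordering it with zero rows and columns; since such bordering only appends zeros to the Smith normal form and $\hat{W}(\tilde{D}_n)=W(B)$ by Lemma~\ref{lem:til=B}, the nonzero invariant factors of $W(\tilde{D}_n)$ coincide with those of the $(n-1)\times(n-1)$ matrix $W(B)$, which has rank $r=\lfloor n/2\rfloor$ by Theorem~\ref{thm:rankB}. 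Hence it suffices to show that the Smith normal form of $W(B)$ is $\operatorname{diag}(1,\ldots,1,s,0,\ldots,0)$ with $r-1$ leading ones, where $s=n-1$ if $n$ is even and $s=(n-1)/2$ if $n$ is odd.

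Writing $D_k$ for the $k$-th determinantal divisor of $W(B)$ (the greatest common divisor of its $k\times k$ minors) and $d_k=D_k/D_{k-1}$ for its invariant factors, the claim amounts to $D_{r-1}=1$ and $D_r=s$. To locate the value of $D_r$, I would use the factorization underlying Lemma~\ref{lem:rank}. Let $w_1,\ldots,w_r$ be the main eigenvectors among $v_0,\ldots,v_{n-2}$, namely those with $e_{n-1}^Tv_k\neq0$, with corresponding eigenvalues $\mu_1,\ldots,\mu_r$. From $w_i^T B^j e_{n-1}=\mu_i^{\,j}\,(w_i^T e_{n-1})$ (valid since $B^Tw_i=\mu_i w_i$) one obtains, on restricting to the first $r$ columns and taking determinants,
\[
\det\big([\,w_1\ \cdots\ w_r\,]^T\,W_r(B)\big)=\prod_{i}(w_i^Te_{n-1})\prod_{i<i'}(\mu_{i'}-\mu_i),
\]
where $W_r(B)$ denotes the first $r$ columns of $W(B)$. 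By Lemma~\ref{lem:dotprd} the product $\prod_i(w_i^Te_{n-1})$ equals $n-1$, since $e_{n-1}^Tv_0=n-1$ while every other main eigenvector contributes a factor $1$; this is the origin of the number $n-1$. The remaining Vandermonde factor in the explicit main eigenvalues $2\cos\frac{k\pi}{n-2}$ is what should account for the extra factor $\tfrac12$ precisely when $n$ is odd, which is exactly the case in which $-2$ is \emph{not} a main eigenvalue (Lemma~\ref{lem:dotprd}).

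The crux --- and the reason the statement is only conjectural --- is to convert this real identity into the exact integer value of $D_r$, that is, to show that the index of the relevant integral column lattice is trivial, so that $D_r$ equals $s$ and not a proper divisor or multiple. I would attempt a direct integral reduction of $W(B)$: using $AP=PB$ together with the recurrence $\cos(m-1)\theta+\cos(m+1)\theta=\lambda_k\cos m\theta$ from the proof of Proposition~\ref{prop:eigvec}, the columns $B^je_{n-1}$ satisfy an integer linear recurrence, which should allow one to bring the first $r$ columns of $W(B)$ to an almost triangular form over $\mathbb{Z}$ whose diagonal entries are units apart from a single entry equal to $s$. The divisibility $D_{r-1}=1$ is the easier half: it suffices to exhibit a single $(r-1)\times(r-1)$ minor equal to $\pm1$, for which the leading principal part of this reduced form is the natural candidate. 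Carrying out the reduction in closed form for all $n$, and separating the two parity cases cleanly, is the principal obstacle; Example~\ref{ex:snf} (where $s=7=n-1$ for $n=8$) and the small odd cases provide consistency checks.
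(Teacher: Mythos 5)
First, note that the paper does not prove this statement at all: it appears as an open conjecture in the ``Future works'' section, so there is no proof of record to compare against, and your proposal must stand on its own. Its first half is sound and uses exactly the paper's machinery: by Proposition \ref{prop:rank}, Lemma \ref{lem:til=B} and Theorem \ref{thm:rankB}, the nonzero invariant factors of $W(\tilde{D}_n)$ coincide with those of $W(B)$, since bordering a matrix by zero rows and columns only pads its Smith normal form with zeros. Reducing the conjecture to the claim that the Smith normal form of $W(B)$ is $\operatorname{diag}(1,\ldots,1,s,0,\ldots,0)$ with $r-1$ leading ones is therefore a legitimate and correct first step.

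The gap is in the second half, and it is exactly the part that makes the statement a conjecture. The identity $\det\big([\,w_1\ \cdots\ w_r\,]^T W_r(B)\big)=\prod_i (w_i^Te_{n-1})\prod_{i<i'}(\mu_{i'}-\mu_i)$ is a real-number identity in which both the matrix $[\,w_1\ \cdots\ w_r\,]^T$ and the Vandermonde product are irrational (the main eigenvalues $\mu_i=2\cos\frac{k\pi}{n-2}$ are not integers), so it cannot by itself determine the determinantal divisor $D_r$, which is a gcd of integer $r\times r$ minors of $W(B)$ alone. Converting it into the exact integer value $D_r=s$ would require controlling the integral lattice spanned by the rows of $W_r(B)$ --- equivalently, actually carrying out the integral column reduction you sketch via the three-term cosine recurrence --- and that step is left unexecuted; you concede as much when you call it ``the principal obstacle.'' Similarly, $D_{r-1}=1$ is asserted only by pointing to a candidate $(r-1)\times(r-1)$ minor that is never computed, and the claim that the Vandermonde factor contributes exactly the factor $\tfrac12$ when $n$ is odd is a heuristic, not an argument. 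What you have is a correct reduction plus a program for the hard part, supported by the single data point of Example \ref{ex:snf}; the conjecture remains open after your argument, just as it does in the paper.
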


\bibliographystyle{plain}

\begin{thebibliography}{10}


\bibitem{Albeverio}
Albeverio S, Ostrovskyi V, Samoilenko Y.
\newblock{On functions on graphs and representations of a certain class of {$\ast$}-algebras},
\newblock{J. Algebra} 203 (2007) 567--582.

\bibitem{Cvetkovic}
Cvetkovi\'{c}~DM, Gutman~I.,
\newblock{On spectral structure of graphs having the maximal eigenvalue not greater than two},
\newblock{Publ. Inst. Math. (Beograd) (N.S.)} 18 (32) (1975) 39--45.


\bibitem{Draper}
Draper Fontanals~C.
\newblock{Notes on {$G_2$}: the {L}ie algebra and the {L}ie group},
\newblock{Differential Geom. Appl.} 57 (2018) 23--74.

\bibitem{Dokuchaev}
Dokuchaev MA, Gubareni NM, Futorny VM, Khibina MA, Kirichenko VV.
\newblock{Dynkin diagrams and spectra of graphs},
\newblock{S\~{a}o Paulo J. Math. Sci.} 7 (2013) 83--104.

\bibitem{Donovan}
Donovan~P, Freislish~MR.
\newblock{The representation theory of finite graphs and associated algebras},
\newblock{Carleton Mathematical Lecture Notes, No. 5},
Carleton University, Ottawa, Ont., (1973).

\bibitem{Dynkin}
Dynkin~E.
\newblock{Classification of the simple Lie groups},
\newblock{Rec. Math. [Mat. Sbornik] N. S.} 18 (60) (1946) 347--352.

\bibitem{Frappat}
Frappat I, Sciarrino~A, Sorba~P.
\newblock{Structure of basic {L}ie superalgebras and of their affine extensions},
\newblock{Comm. Math. Phys.} 121 (1989) 457--500.

\bibitem{Gabriel}
Gabriel~P.
\newblock{Unzerlegbare Dartellungen I},
\newblock{Manuscr. Math}, 6 (1972) 71--103.

\bibitem{Gantmacher}
Gantmacher~FR.
\newblock{The theory of matrices Vols. 1},
Chelsea Publishing Co. New York, 1959.


\bibitem{Hagos}
Hagos~EM.
{\newblock Some results on graph spectra},
\newblock{Linear Algebra Appl.} 356 (2002) 103--111.

\bibitem{Kruglyak}
Kruglyak~SA, Nazarova~LA, Ro\u{\i}ter~AV.,
\newblock{Orthoscalar Quiver Representations Corresponding to Extended Dynkin Graphs in the Category of Hilbert Spaces},
\newblock{Funktsional. Anal. i Prilozhen.} 44 (2010) 57--73.



\bibitem{Nazarova}
Nazarova~LA.
\newblock{Representations of quivers of infinite type},
\newblock{Izv. Akad. Nauk SSSR Ser. Mat.} 37 (1973) 752--791.


\bibitem{Ostrovskii}
Ostrovs'ki\u{\i}~VL, Samo\u{\i}lenko~YS.
\newblock{On spectral theorems for families of linearly connected selfadjoint operators with prescribed spectra associated with extended {D}ynkin graphs},
\newblock{Ukra\"{\i}n. Mat. Zh.} 58 (2006) 1556--1570.

\bibitem{Redchuk}
Redchuk~IK, Ro\u{\i}ter~AV.
\newblock{Singular locally scalar representations of quivers in Hilbert spaces and separating functions},
\newblock{Ukra\"{\i}n. Mat. Zh.} 56 (2004) 796--809.


\bibitem{Wang22}
Wang~W., Wang~C, Guo~S.
\newblock{On the walk matrix of the Dynkin graph $D_n$},
\newblock{Linear Algebra Appl.} 653 (2022) 193--206.

\bibitem{Yusenko}
Yusenko~K.
\newblock{On existence of {$*$}-representations of certain algebras related to extended {D}ynkin graphs},
\newblock{Methods Funct. Anal. Topology} 12 (2006) 197--204.






\end{thebibliography}

\end{document}